\documentclass[11pt,letterpaper,reqno]{amsart}
\usepackage{amssymb}
\usepackage{amsmath}
\usepackage{amsthm}
\usepackage{amsfonts}
\usepackage{bbm}
\usepackage{enumitem} 
\usepackage{booktabs}
\usepackage{graphicx}
\usepackage[T1]{fontenc}
\usepackage{doi}
\usepackage{comment} 
\usepackage{bookmark}
\usepackage{hyperref}
\hypersetup{pdfstartview={FitH}}

\newtheorem{theorem}{Theorem}[section]
\newtheorem{lemma}[theorem]{Lemma}
\newtheorem{proposition}[theorem]{Proposition}

\newtheorem{problem}[theorem]{Problem}

\theoremstyle{definition}

\newtheorem{remark}[theorem]{Remark}

\newcommand{\F}{\mathbb F}
\newcommand{\Z}{\mathbb Z}
\newcommand{\R}{\mathbb R}
\newcommand{\E}{\mathbb E}
\newcommand{\1}{\mathbf 1}
\newcommand{\eps}{\varepsilon}
\DeclareMathOperator{\Aff}{Aff}
\DeclareMathOperator{\Bin}{Bin}

\begin{document}

\title{Almost Affine Invariance over Prime Fields: Green Problem 90}

\author{Jie Ma}
\address{School of Mathematical Sciences, University of Science and Technology of China, Hefei, Anhui 230026, and Yau Mathematical Sciences Center, Tsinghua University, Beijing 100084, China}
\email{jiema@ustc.edu.cn}

\author{Quanyu Tang}
\address{School of Mathematics and Statistics, Xi'an Jiaotong University, Xi'an 710049, P. R. China}
\email{tangquanyu827@gmail.com}

\author{Max Wenqiang Xu}
\address{Courant Institute of Mathematical Sciences, 251 Mercer Street, New York 10012, USA}
\email{maxxu1729@gmail.com}

\begin{abstract}
 Let $A\subset \mathbb{F}_p$ with density 1/2. We call a   set $A$ almost affine invariant under an affine transformation $\phi(x)=ax+b$ if
\[|A \triangle \phi(A)| =o(p).\]
We determine that, the threshold value of $K$ such that $A$ is almost affine invariant simultaneously under all $\phi(x)$ with $|a|, |b|\le K$ and $a\neq 0$, is $K=o(\log p)$. This solves Ben Green's Open Problem 90. 
\end{abstract}

\maketitle

\section{Introduction}

Let $p$ be an odd prime and identify $\Z/p\Z$ with the finite field $\F_p$. 
The following problem is Problem~90 on Ben Green's list of 100 open problems~\cite{GreenProblems}.

\begin{problem}[{\cite[Problem~90]{GreenProblems}}]
\label{prob:green90}
Determine for which ranges of $K=K(p)$ there exists a set
$A\subset \F_p$ of density $1/2$ which is almost invariant under all affine maps
\[
    \phi(x)=ax+b,\qquad 1\le |a|\le K, |b|\le K. \footnote{We note that in the original statement of the problem, it does not exclude the possibility $a\neq 0$ in $\mathbb{F}_p$. But it is clear in this case that $|A \triangle \phi(A)| =(1/2+o(1))p$. So we consider the modified version of the problem.}
\]
\end{problem}
Here \emph{almost invariant} under an affine transformation $\phi$
means that as $p\to \infty$, it holds that
\[
        |A\triangle \phi(A)|=o(p).
\]
Note that a set $A$ is almost invariant under a given $\phi$ implies certain structural information of the set $A$. In other words, the set $A$ needs to satisfy some constraints. The interesting feature of the problem is to investigate how many different constraints that set $A$ can simultaneously satisfy. It is a very natural question to find an optimal upper bound on $K$ for any set $A$ with density 1/2 with the given almost affine invariance property. Here, an optimal bound means that there exists a construction of $A$ such that it does satisfy the property for  $K$ below the bound.

In the comments to Problem~90, Green noted that the problem had been
considered by Eberhard, Mrazovi\'c, and Green in unpublished work \cite{notes}.
He claimed that one can take \(K\to\infty\) slowly, for instance
\(K\ge (\log p)^c\) for some \(c>0\), while \(K\) cannot be as large
as \(p^{1/100}\). He commented that their lower bound is proved by considering the amenability of affine groups, and we also refer to \cite{MOQ,MOA} for related discussions around this idea.

In this paper, we show that $K=o(\log p)$ is the sharp bound and thus determines the threshold.  

\begin{theorem}\label{thm:main}
Let $p$ be a prime and \(K=K(p)\) be a positive integer. If there exist sets
\(A_p\subset \F_p\) with \(|A_p|=(1/2+o(1))p\) such that
\begin{equation}\label{eqn: invariant} 
\max_{\substack{a,b\in\Z\\ 1\le |a|\le K(p),\ |b|\le K(p)\\ p\nmid a}}
        \frac{|A_p\triangle (aA_p+b)|}{p}=o(1),
\end{equation}
then \(K=o(\log p)\). Conversely, if \(K=o(\log p)\), then such
sets \(A_p\) exist.
\end{theorem}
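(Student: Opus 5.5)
The plan has two independent halves: a Fourier-analytic obstruction showing that $K$ cannot be of order $\log p$, and a construction for $K=o(\log p)$ via Følner averaging in the affine group. Both halves are tentative; the bounds governing the threshold are sketched, not proved. Throughout, put $f=\1_{A}-\tfrac12$ on $\F_p$, so $\sum_x f(x)=o(p)$ and $\|f\|_2^2=p/4$.

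\emph{Necessity ($K=o(\log p)$).} Suppose along a subsequence $K\ge c\log p$ while (\ref{eqn: invariant}) holds with error $\eps=o(1)$. I will pass to the Fourier side.
\begin{enumerate}[label=(\roman*)]
\item Translation invariance by every $|b|\le K$ gives $\sum_\xi |\hat f(\xi)|^2|1-e(b\xi/p)|^2\le \eps p^2$. Averaging over $b$ shows that all but $O(\eps)$ of the Fourier mass $\mu(\xi)=|\hat f(\xi)|^2/\|f\|_2^2$ lies on $|\xi|\le \omega p/K$ (integer representatives in $(-p/2,p/2)$), where $\omega\to\infty$ slowly. The mass at $\xi=0$ is $o(1)$.
\item Invariance under $x\mapsto ax$ gives $\|\hat f(a^{-1}\cdot)-\hat f\|_2^2\le \eps p^2$. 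Hence $\mu$ is almost invariant under multiplication by every $a$ with $1\le|a|\le K$.
\item For frequencies that are small integers, multiplication by $a$ is honest integer multiplication. So on the logarithmic scale $t=\log|\xi|\in[0,\log(p/K)]$, $\mu$ becomes a probability measure almost invariant under all shifts $\log a$, $a\le K$.
\end{enumerate}
The point is then a counting/entropy bound. Iterating words of length $L$ in these shifts, the mass is spread over $K$-smooth integers up to $p$. When $K\ge c\log p$ there are $p^{\Omega(1)}$ such integers, and the multiplicative random walk mixes on the scale $\log p$ within $O(1)$ steps. It pushes a positive proportion of mass past $p/K$, contradicting (i). The error accumulates only linearly in the $O(1)$ number of steps. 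I expect the quantitative mixing to be the main obstacle: one must show the multiplicative walk escapes $[1,p/K]$ in a bounded number of steps, with a constant depending only on $c$. I would first try a second-moment argument on $\log|\xi|$, since each step with uniform $a\le K$ has drift about $\log K$.

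\emph{Sufficiency ($K=o(\log p)$).} Let $S=\{x\mapsto ax+b:1\le|a|,|b|\le K\}$ and $G=\langle S\rangle\le \Aff(\F_p)$. Following the amenability idea mentioned in the introduction, I will build a finite Følner set $F\subset\Aff(\Z[1/K!])$, a solvable and hence amenable group. It should satisfy $|sF\triangle F|\le\delta|F|$ for all $s\in S$, and consist of maps whose coefficients have height $\le \exp(C_\delta K)$. Then I define
\[
g(x)=\frac1{|F|}\sum_{\psi\in F}\1_I(\psi(x)),\qquad I=\{0,\dots,\lfloor p/2\rfloor\},
\]
reducing each $\psi$ mod $p$. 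This is legitimate as long as $\exp(C_\delta K)<p$, which holds precisely because $K=o(\log p)$. Then $\|g\circ s-g\|_1\le\delta p$ for every $s\in S$, and $\sum_x g(x)\approx p/2$. Finally I take $A=\{g>1/2\}$, or a random rounding of $g$, and adjust the density to $1/2$ by a Markov-type argument. To avoid $g$ being nearly constant $1/2$, I will choose $I$ as a long interval, so that $g$ stays close to $0/1$-valued. Letting $\delta\to0$ slowly gives (\ref{eqn: invariant}).

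The delicate step here is the explicit Følner set with height bound $e^{O(K)}$. My first attempt would be a box $\{x\mapsto ax+b: a\in P,\ b\in B\}$. Here $P$ is the multiplicative box of $a=\prod_{q\le K}q^{e_q}$ with $|e_q|\le M$, and $B$ is a large additive box scaled to absorb conjugation by $P$.
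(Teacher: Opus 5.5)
Steps (i)--(ii) match the paper, but the rest of your necessity argument cannot reach the threshold, and the step you flag as the main obstacle cannot be carried out as stated. You may reduce to the critical case $K\asymp\log p$. There, each multiplication by $a\le K$ enlarges a frequency by at most a factor $K$, so mass at, say, $|\xi|\approx p^{1/2}$ cannot leave $[1,p/K]$ in $O(1)$ steps. You need $\asymp\log p/\log K$ steps, and the per-step errors, which are $o(1)$ with no rate, then need not stay small. More fundamentally, passing to $t=\log|\xi|$ in (iii) discards exactly the information that matters. The uniform distribution on $[0,\log(p/K)]$ is within $O(\log K/\log p)$ of its translate by every $\log a$, $a\le K$, so an argument that only sees $\log|\xi|$ cannot prove more than $K=p^{o(1)}$. (Also, for $K\asymp\log p$ only $p^{o(1)}$ integers up to $p$ are $K$-smooth, not $p^{\Omega(1)}$.) The missing idea is to use divisibility one prime at a time. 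Restrict to a probability measure $\lambda$ on $\{1,\dots,N\}$ with $N=\lfloor(p-1)/(2K)\rfloor$, so that multiplication by $q\le K$ does not wrap around. It satisfies $\|\lambda-T_q\lambda\|_1\le\eta=o(1)$ for every prime $q\le K$, where $(T_q\lambda)(n)=\lambda(qn)$, set to $0$ if $qn>N$. With $H_t=\{n:v_q(n)\ge t\}$ one has $(T_q\lambda)(H_t)=\lambda(H_{t+1})$. Hence $\lambda(v_q(n)=t)\le\eta$ for every $t$ from a single use of near-invariance (no iteration, so no accumulation of errors), which gives $\E_\lambda v_q(n)\ge 1/(8\eta)$. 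Summing over primes,
\[
\log N\ge\E_\lambda\log n\ge\sum_{q\le K}(\log q)\,\E_\lambda v_q(n)\gg\frac1\eta\sum_{q\le K}\log q\gg\frac K\eta .
\]
It is this additivity over all primes $q\le K$, with total weight $\asymp K$ instead of the drift $\log K$ of a single step, that yields $K=o(\log p)$.

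In the sufficiency half, the $L^1$ almost-invariance of $g$ carries no information by itself: $g\equiv 1/2$ is exactly invariant under every affine map. Your height condition is also never used there, since $|g(sx)-g(x)|\le|Fs\triangle F|/|F|$ pointwise whatever the heights (note it is $Fs$, not $sF$, that appears). All the difficulty lies in rounding $g$ to a set, and by the first half this must fail once $K$ is not $o(\log p)$; your plan gives no mechanism for it. Since $F$ contains many dilations and long progressions of translations, the points $\psi(x)$, $\psi\in F$, are spread over $\F_p$, and $g(x)=1/2+o(1)$ for most $x$. Membership in $\{g>1/2\}$ is then decided by deviations that $\|g\circ s-g\|_1\le\delta p$ does not control. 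Independent rounding gives $\mathbb P(\1_A(x)\ne\1_A(sx))\approx 1/2$, i.e.\ $|A\triangle sA|\approx p/2$. Choosing $I$ so that $g$ is nearly $0/1$-valued would require $\1_I$ to be almost invariant under $F$ already, which is circular. The paper instead replaces $\1_I$ by independent random signs $\xi_{[y]}$ on the classes $[y]=\{y,-y\}$ and takes $A=\{x:\sum_{\psi\in\mathcal F}\xi_{[\psi^{-1}x]}>0\}$. This is where smallness enters. Because $|\mathcal H|=p^{o(1)}$ and all coefficients have height $p^{o(1)}$, reduction mod $p$ is injective on $\widetilde{\mathcal H}$ and creates no coincidences $h_1^{-1}=-h_2^{-1}$. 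So for all but $2|\mathcal H|^2=o(p)$ points $x$ the classes $[h^{-1}x]$, $h\in\mathcal H$, are distinct, and the corresponding seeds are independent. Then $\1_A(x)$ and $\1_A(sx)$ are majorities of $n$ independent signs over index sets differing in at most $\Delta n$ places. These disagree with probability $O(\Delta^{1/3})$ by anticoncentration of the binomial. McDiarmid's inequality with a union bound over $|S|=p^{o(1)}$ then yields one good $A$, which is even and so also handles $a<0$. Finally, your first-attempt box $|e_q|\le M$ does not achieve height $e^{O_\delta(K)}$. Dilation by $2^{\lfloor\log_2 K\rfloor}$ forces $M\gg\delta^{-1}\log K$, so the height $\prod_{q\le K}q^M$ is at least $e^{cK\log K/\delta}$, which only permits $K=o(\log p/\log\log p)$. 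You need the weighted box $|e_q|\le L/\log q$, whose F\o lner error for dilation by $a$ is $O(\log a/L)$ at height $e^{O(LK/\log K)}$.
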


\begin{remark}
    By tracking the proof, one can get a quantitative relation between the two convergence rates $o(1)$ in the statement. We do not pursue it here, and it seems to the authors that the method here would not lead to a sharp dependence estimate. We leave this question to interested readers. 
\end{remark}

We prove the upper bounds in Section~\ref{sec:upper} and give a construction of sets $A_p$ in Section~\ref{sec:lower} satisfying \eqref{eqn: invariant}, as long as $K=o(\log p)$. So the proof of Theorem~\ref{thm:main} is completed by combining the two propositions.

\subsection*{Outline}
We give a quick outline of the proof strategy. For the upper bound proof, we use Fourier analytic tools. Let $f:= 1_A- \frac{|A|}{p}$. 
It is shown that 
translation invariance implies that the Fourier mass
\(\sum_r|\widehat f(r)|^2\) is concentrated on frequencies
\(|r|\ll p/K\).  This is relatively standard and easy to show. Then we restrict ourselves to a probability measure $\mu$ on $\{1, 2, \dots, N\}$ with $N\asymp p/K$ and the assignment to the mass is essentially proportional to the size $|\widehat{f}(r)|^{2}$. 
 To use the multiplicative invariance property, we consider the dilation by primes $q\le K$ and it would imply that $\mu$ is an almost invariant probability
measure on the interval $\{1,\dots,N\}$ under the maps
$n\mapsto qn$.  Moreover, under such a measure, we will show that 
$q$-adic valuation $v_q(n)$ is relatively large and this holds 
for every prime $q\le K$, which is possible only when the total prime weight satisfies that 
$\sum_{q\le K}\log q$ is $o(\log p)$ and this implies that $K=o(\log p)$.

For the lower bound, we use the probabilistic method to construct it. The main part of the proof is to establish a general statement (Lemma~\ref{lem:folner-majority}) about showing the existence of large ``F\o lner sequences'' $A_p$ under actions by elements $s$ in a small family $S_p$ of affine transformations. More precisely, this just means that $|sA_p \triangle A_p|/p$ is small for all $s\in S_p$. The existence is guaranteed under certain conditions, and the key condition is that there exists a small family $\mathcal{F}$ of affine transformations such that $|\mathcal{F} \triangle s^{-1} \mathcal{F}|/|\mathcal{F}|=o(1)$ holds for all $s\in S_p$. Here ``small'' means on the order of $p^{o(1)}$. Lemma~\ref{lem:folner-majority} is proved by the probabilistic method. 
To prove our lower bound, we first need to choose $\mathcal{F}_p$ and $S_p$. And the natural choices are obtained from sets of suitable affine maps over $\mathbb{Q}$ and then modulo $p$ (which leads to affine transformations). Then we get the existence of $A$ by applying Lemma~\ref{lem:folner-majority} to $\mathcal{F}_p$ and $S_p$.   

An interesting step in applying the probabilistic argument is that we first prove that a random $A$ with correct size in expectation, i.e. $\mathbb{E}|A|\sim p/2$, and it satisfies the desired invariant property; however, this can not guarantee the existence of $A$. To do so, we employ the so-called bounded difference inequality to get a concentration estimate which leads to the existence.    

\subsection*{Notation}
Throughout the paper, all $o(1)$ denote the quantities that tend to zero as $p\to\infty$. 
For $r\in\F_p$, write $|r|$ for the absolute value of the representative of $r$ in
$[-(p-1)/2,(p-1)/2]\cap\Z$, and write $e(t)=e^{2\pi i t}$. For a field \(F\), we write
\[
        \Aff(F):=\{\,x\mapsto ax+b:\ a\in F^\times,\ b\in F\,\}
\]
for the affine group over \(F\), with group law given by composition.

\subsection*{Acknowledgements}
We are grateful to Ben Green for detailed comments on an earlier version of the paper, which improved the writing of the work, and for pointing out the relevant discussions on MathOverflow \cite{MOQ, MOA} and references \cite{green2003constructions, Pete2020} and sharing their notes \cite{notes}.

JM was supported by the National Key Research and Development Program of China 2023YFA1010201, National Natural Science Foundation of China grant
12125106, and Innovation Program for Quantum Science and Technology 2021ZD0302902.  
MWX is supported by a Simons Junior Fellowship from the Simons Foundation. 
   
\subsection*{AI Disclosure} We used ChatGPT 5.4 in this work.
AI input plays an important role in some part of the work, although the original arguments came up by AI contain many logical mistakes and gaps along the iterative processes. We provide a summary of the ideas that, in our opinion, are mostly due to AI. First, AI came up with the idea of considering the $q$-adic valuation formulation (see Lemma~\ref{lem:vq}), which provides a clean way in the final step to get the sharp upper bound $o(\log p)$. Second, the idea of using the amenability of the affine group
(especially in an old version of Lemma~\ref{lem:folner-majority}) is due to AI. But we emphasize again that this idea was first used in \cite{notes} (as explicitly mentioned in \cite{GreenProblems}) and also, as Ben Green pointed out to us afterwards, there were related discussions around this idea online (a question asked by Freddie Manners on MathOverflow~\cite{MOQ} and the answer \cite{MOA} by Terry Tao).

\section{The upper bound}\label{sec:upper}

In this section, we prove the upper bound $K=o(\log p)$.

\begin{proposition}\label{prop:upper}
Suppose that $A\subset\F_p$ satisfies $|A|=(1/2+o(1))p$ and
\[
        \max_{\substack{a,b\in\Z\\ 1\le |a|\le K,\ |b|\le K\\ p\nmid a}}
        \frac{|A\triangle(aA+b)|}{p}=o(1).
\]
Then $K=o(\log p)$.
\end{proposition}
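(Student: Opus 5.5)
We argue by contradiction: assume $K\ge c\log p$ for some fixed $c>0$ along a subsequence, and derive a contradiction. Throughout we write $f:=1_A-|A|/p$, so that $\|f\|_2^2=\sum_x|f(x)|^2=(1/4+o(1))p$. We normalize $\widehat f(r)=\sum_x f(x)e(-rx/p)$, so that Parseval reads $\sum_r|\widehat f(r)|^2=p\|f\|_2^2$. Note that $\widehat f(0)=0$.

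\textbf{Step 1 (translation invariance).} Taking $a=1$, $|b|\le K$, the hypothesis gives $\|f(\cdot-b)-f\|_2^2=o(p)$ uniformly in $|b|\le K$. By Parseval this reads
\[
\sum_r|\widehat f(r)|^2|1-e(rb/p)|^2=o(p^2).
\]
Averaging over $b\in\{0,\dots,K\}$, a Fej\'er-kernel bound shows the mass on $|r|> Mp/K$ is $O(1/M)\cdot p^2+o(p^2)$, for any fixed $M$. Hence for some $M=M(p)\to\infty$ slowly, the mass on frequencies $0<|r|\le N:=Mp/K$ is $(1-o(1))p\|f\|_2^2$.

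\textbf{Step 2 (multiplicative invariance on the Fourier side).} For $b=0$ and a prime $q\le K$, the map $x\mapsto qx$ permutes $\F_p$, and $\widehat{f(q^{-1}\cdot)}(r)=\widehat f(q^{-1}r)$. So invariance gives
\[
\sum_r|\widehat f(qr)-\widehat f(r)|^2=o(p^2).
\]
By Cauchy--Schwarz, the weights $w(r)=|\widehat f(r)|^2/(p\|f\|_2^2)$ satisfy $\sum_r|w(qr)-w(r)|=o(1)$. Define $\mu$ as the normalized restriction of $w$ to $1\le |r|\le N$, and fold $\pm r$ together using $a=-1$, or simply use $|\widehat f(-r)|=|\widehat f(r)|$ for real $f$. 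Then $\mu$ is a probability measure on $\{1,\dots,N\}$. Since $qr$ computed in $\F_p$ agrees with $qr$ in $\Z$ when $|r|\le N$ and $qN<p/2$ (harmless after shrinking), we get
\[
\mu(\{n: n/q\in\Z,\ \ldots\})\approx\ldots
\]
More precisely, $\|q_*\mu-\mu\|_{TV}=o(1)$ for each prime $q\le K$, with the $o(1)$ uniform in $q$. Uniformity requires care: the bound from the hypothesis is a max, so it is indeed uniform. One subtlety is that $qr$ may exceed $N$; this is handled because $w$ has little mass in $(N,qN]$ once $M$ is chosen suitably. We could also choose $N$ with $Kp/K$-scale margin by taking $N=p/(KM')$ after redoing Step 1 with multiples of $a$. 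I expect Step 2 to be the most delicate bookkeeping.

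\textbf{Step 3 ($q$-adic valuation).} For a measure $\mu$ on $[1,N]$ with $\|q_*\mu-\mu\|\le\eps$, we show $\E_\mu v_q(n)\ge (1-o(1))\cdot(\text{large})$. Concretely, iterating gives $\|q^j_*\mu-\mu\|\le j\eps$. The measure $q^j_*\mu$ is supported on multiples of $q^j$. So $\mu(q^j\mid n)\ge 1-j\eps$ for $j\le J$ with $J\eps\to0$ but $J\to\infty$, which yields $\E_\mu v_q(n)\ge J/2$ say. This is presumably the content of the paper's Lemma~\ref{lem:vq}. Since $\eps$ is uniform over $q\le K$, the same $J\to\infty$ works for all such $q$.

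\textbf{Step 4 (conclusion).} Summing over primes $q\le K$,
\[
\log N\ge\E_\mu\log n\ge\sum_{q\le K}\log q\,\E_\mu v_q(n)\ge \tfrac J2\,\theta(K)\gg J K.
\]
Since $\log N\le\log p$, this gives $K\ll \log p/J=o(\log p)$, contradicting $K\ge c\log p$. The main obstacle is Step 2: making the transfer of multiplicative invariance to an almost-invariant measure on $\{1,\dots,N\}$ rigorous. This needs the uniformity in $q$ and control of boundary mass near $N$. I would first try to handle the boundary by slightly enlarging the window, using the tail bound from Step 1.
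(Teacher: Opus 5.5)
Your strategy is the paper's: Fourier weights, localisation via translations, almost-invariance under $n\mapsto qn$, the $q$-adic valuation bound, and Chebyshev. Steps 3--4 are sound, and iterating the pushforward, $\|q^j_*\mu-\mu\|\le j\eps$, is a clean substitute for Lemma~\ref{lem:vq}. However, the issue you flag at the Step 1/Step 2 interface is a genuine gap, not a harmless one.

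\textbf{Why the argument does not close as written.} You localise to $|r|\le N=Mp/K$ with $M\to\infty$. Step 2, however, needs $qN\le p/2$ for every prime $q$ you use. Only then does $qr$ computed in $\F_p$ agree with $qr$ in $\Z$ for $|r|\le N$. Only then does $(N,qN]$ lie inside the low-mass region $\{|r|>N\}$, and that is what makes the boundary term (the $w$-mass of $|r|\in(N/q,N]$) small. With $N=Mp/K$ this fails for every prime $q>K/(2M)$.

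Discarding those primes does not rescue the argument with your Step 1 as stated. Step 4 then only yields $\tfrac J2\,\theta(K/(2M))\gg JK/M$. Meanwhile your tail bound $O(1/M)$ enters the boundary term, so the most you can guarantee is $\eps\ll 1/M$, hence only $J\gg M$. The two factors of $M$ cancel, and you get $K=O(\log p)$ rather than $o(\log p)$. Shrinking instead to $N=p/(KM')$ requires a tail bound below the scale $p/K$, which your Step 1 does not provide.

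\textbf{The missing input.} The averaged translation defect is already bounded below at scale $1/K$:
$W_K(t)=\frac1K\sum_{b=1}^K|e(bt)-1|^2\gg\min\bigl(1,(K\|t\|)^2\bigr)$. In particular $W_K(t)\ge c_0$ once $\|t\|\ge 2/(5K)$ (Lemma~\ref{lem:WK}). Hence the Fourier mass on $|r|>cp/K$ is $o(p^2)$ for every fixed $c>0$; your $O(1/M)p^2$ term is spurious.

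\textbf{How the argument then goes through.} Take $N=\lfloor (p-1)/(2K)\rfloor$. Then $qN\le KN\le (p-1)/2$ for every prime $q\le K$. The boundary term is controlled exactly as in \eqref{eq:tail-small}: $w(\{|r|\in(N/q,N]\})\le w(\{|r|\in(N,qN]\})+o(1)=o(1)$. Your Steps 2--4 then hold for all primes $q\le K$ with a uniform $\eps=o(1)$, giving $K\ll\eps\log p=o(\log p)$. With that correction, your argument is essentially the paper's proof.
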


\begin{proof}
For any subsequence of primes $p$, 
we may only need to consider the case that $$K = K(p)\to \infty. $$ If not, then $K(p)$ is bounded along the subsequence and the conclusion holds. 

Set
\[
    \eps_p:=
    \max_{\substack{a,b\in\Z\\ 1\le |a|\le K,\ |b|\le K\\ p\nmid a}}
        \frac{|A\triangle(aA+b)|}{p}=o(1),
    \qquad
    \alpha:=\frac{|A|}{p}=\frac12+o(1).
\]
As usual, we do the shift and put $f:=\1_A-\alpha$.  For $r\in\F_p$ define
\[
    \widehat f(r):=\sum_{x\in\F_p} f(x)e(-rx/p).
\]
Since $\widehat f(0)=0$, Parseval identity gives
\[
M:=\sum_{r\in\F_p^\times}|\widehat f(r)|^2 =p\sum_{x\in\F_p}|f(x)|^2
      =\alpha(1-\alpha)p^2
=\left(\frac14+o(1)\right)p^2.
\]
We shall use the probability measure that for $r\in\F_p^\times$
\[
    \mu(r):=\frac{|\widehat f(r)|^2}{M}.
\]
We start to investigate the approximate invariant properties. Our first goal is to show that, by using the approximately translation invariant property, the measure $\mu$ is concentrated on those small $r$, say, no larger than $p/K$. 

For $1\le b\le K$,
\[
    \eps_p p \ge |A\triangle(A+b)|  =  \sum_{x\in\F_p}|f(x+b)-f(x)|^2.
\]
Another application of Parseval identity to function $f(x+b)-f(x)$ yields that for $|b|\le K$, 
\[\sum_{r\in\F_p} |\widehat f(r)|^2|e(br/p)-1|^2\le \eps_p p^2.
\]
Averaging in $1\le b\le K$ gives
\[\sum_{r\in\F_p} |\widehat f(r)|^2 W_K(r/p)\le \eps_p p^2,
    \qquad
    W_K(t):=\frac1K\sum_{b=1}^K |e(bt)-1|^2.
\]

\begin{lemma}\label{lem:WK}
There is an absolute constant $c_0>0$ such that, for every $K\ge2$ and every
$t\in\R/\Z$ with $\|t\|_{\R/\Z}\ge 2/(5K)$, one has $W_K(t)\ge c_0$. Here $\|t\|_{\R/\Z}$ denotes the distance of $t$ to the nearest integer.
\end{lemma}

\begin{proof}
The proof is very straightforward and standard; we omit the proof. 
\end{proof}

Let
\[
        N:=\left\lfloor\frac{p-1}{2K}\right\rfloor,
        \qquad
        I:=\{r\in\F_p^\times: |r|\le N\}.
\]
If $r\notin I$, then $|r|\ge N+1$.  Since
\[
        N+1>\frac{p-1}{2K}\ge \frac{2p}{5K}
\]
for all $p\ge5$, it follows that
\[
        \left\|\frac rp\right\|_{\R/\Z}
        =\frac{|r|}{p}\ge \frac{2}{5K}
\]
for all sufficiently large $p$.  Lemma~\ref{lem:WK} therefore implies that
\(W_K(r/p)\ge c_0\) for every \(r\notin I\), and hence
\[
    c_0\sum_{r\in\F_p^\times\setminus I}|\widehat f(r)|^2\le \eps_p p^2.
\]
Thus
\[
    \sum_{r\in\F_p^\times\setminus I}|\widehat f(r)|^2=o(p^2),
    \qquad\text{and hence}\qquad
    \mu(I)=1-o(1).                         \tag{2.1}\label{eq:mass-in-I}
\]
In particular $I$ is non-empty for all large $p$, so $N\ge1$ and $K\le(p-1)/2$.
All primes $q\le K$ are therefore non-zero modulo $p$.

\bigskip

Next we investigate the dilations $x\mapsto qx$ for primes $q\le K$.  Since
$|A\triangle qA|\le\eps_p p$,
\[
    \sum_{x\in\F_p}|f(q^{-1}x)-f(x)|^2\le \eps_p p.
\]
Taking Fourier transforms, it gives
\[
    \sum_{r\in\F_p}|\widehat f(qr)-\widehat f(r)|^2\le \eps_p p^2.
\]
Combining Cauchy's inequality and the above bound, we have
\[
\begin{aligned}
    \sum_{r\in\F_p^\times}\Big| |\widehat f(r)|^2  -|\widehat f(qr)|^2\Big|
    &\le
    \left(\sum_{r\in\F_p}|\widehat f(qr)-\widehat f(r)|^2\right)^{1/2}
    \left(\sum_{r\in\F_p}(|\widehat f(r)|+|\widehat f(qr)|)^2\right)^{1/2}  \\
    &\ll \sqrt{\eps_p}\,p^2.
\end{aligned}
\]
After division by $M\sim p^2/4$,
\[
    \sum_{r\in\F_p^\times}|\mu(r)-\mu(qr)|=o(1)              \tag{2.2}\label{eq:mu-inv}
\]
uniformly for all primes $q\le K$.  Consequently, for every
$E\subset\F_p^\times$,
\[
        |\mu(E)-\mu(qE)|=o(1)                         \tag{2.3}\label{eq:set-mass}
\]
uniformly for primes $q\le K$.

We first record a rough consequence.  We claim that
\[
        K\le 2N                                                   \tag{2.4}\label{eq:K-le-2N}
\]
for all sufficiently large $p$.  If not, choose a prime $q\le K$ with $q>N$ (when $N=1$ take $q=2$, while for $N\ge2$ this follows from Bertrand's postulate).
Since $qN\le KN\le(p-1)/2$, the set $qI$ does not wrap around modulo $p$, and
$qI\cap I=\varnothing$ (due to $q>N$). This implies that $\mu(qI) +\mu(I)\le 1$. But \eqref{eq:set-mass} gives
$\mu(qI)=\mu(I)+o(1)$. Together, these contradict to \eqref{eq:mass-in-I} which states $\mu(I)=1-o(1)$. This proves
\eqref{eq:K-le-2N}, and we remark that this already gives a bound $K\ll p^{1/2}$.

Set
\[
        M_0:=\min(K,N).
\]
By \eqref{eq:K-le-2N}, $M_0\ge K/2$.  Define a probability measure $\lambda$ on
$\{1,\dots,N\}$ as follows.  For $1\le n\le N$ put
\[
        a_n:=\mu(n)+\mu(-n),
        \qquad
        \lambda(n):=\frac{a_n}{\mu(I)}.
\]
For an integer $q\ge2$, define
\[
    (T_q\lambda)(n):=
    \begin{cases}
        \lambda(qn),& qn\le N,\\
        0,& qn>N.
    \end{cases}
\]

Throughout the next two lemmas, $\|\cdot\|_1$ denotes the unnormalised
$\ell^1$-norm on $\{1,\dots,N\}$.

\begin{lemma}\label{lem:Tq}
By using the approximately dilation invariant property, we have that 
for every prime $q\le M_0$,
\[
        \|\lambda-T_q\lambda\|_{\ell^1(\{1,\dots,N\})}=o(1)
\]
uniformly in $q$.
\end{lemma}

\begin{proof}
Fix a prime $q\le M_0$.  Let
\[
        S_q:=\{\pm n: N/q<n\le N\}\subset\F_p^\times.
\]
Since $q\le K$ and $qN\le KN\le(p-1)/2$, the set $qS_q$ lies outside $I$.
Using \eqref{eq:set-mass} and \eqref{eq:mass-in-I},
\[
        \mu(S_q)\le \mu(qS_q)+o(1)
        \le \mu(\F_p^\times\setminus I)+o(1)=o(1).        \tag{2.5}\label{eq:tail-small}
\]
Moreover, by \eqref{eq:mu-inv},
\[
\begin{aligned}
    \sum_{n\le N/q}|a_n-a_{qn}|
    &\le \sum_{n\le N/q}|\mu(n)-\mu(qn)|
        +\sum_{n\le N/q}|\mu(-n)-\mu(-qn)|  \\
    &\le \sum_{r\in\F_p^\times}|\mu(r)-\mu(qr)|=o(1).
\end{aligned}
\]
Combining this and \(\mu(I)=1-o(1)\), we obtain
\[
    \sum_{n\le N/q}|\lambda(n)-\lambda(qn)|
    \le \frac{1}{\mu(I)}\sum_{n\le N/q}|a_n-a_{qn}|=o(1),
\]
and by using \eqref{eq:tail-small},
\[   \sum_{N/q<n\le N}\lambda(n)
    =\frac{1}{\mu(I)}\sum_{N/q<n\le N}a_n
    =\frac{\mu(S_q)}{\mu(I)}
    =o(1).
\]
Therefore triangle inequality implies that 
\[
    \|\lambda-T_q\lambda\|_1
  \le \sum_{n\le N/q}|\lambda(n)-\lambda(qn)|+
      \sum_{N/q<n\le N}\lambda(n) 
 =o(1).
\qedhere\]
\end{proof}

A quick consequence of the above lemma is that the $q$-adic evaluation $v_q$ along the measure $\lambda$ is large. 
\begin{lemma}\label{lem:vq}
Let $\lambda$ be a probability measure on $\{1,\dots,N\}$ and let $q\ge2$. Let $v_q(n)$ denote the $q$-adic valuation of $n$. If
$\|\lambda-T_q\lambda\|_1\le \eta$ with $0<\eta\le1/8$, then
\[
        \E_\lambda v_q(n)\ge \frac{1}{8\eta}.
\]
\end{lemma}

\begin{proof}
For $t\ge0$ let
\[
    H_t:=\{n\le N:v_q(n)\ge t\},
    \qquad
    R_t:=\{n\le N:v_q(n)=t\}.
\]
Since $(T_q\lambda)(H_t)=\lambda(H_{t+1})$, we have
\[
    \lambda(R_t)=\lambda(H_t)-\lambda(H_{t+1})
            =\lambda(H_t)-(T_q\lambda)(H_t)
            \le \|\lambda-T_q\lambda\|_1\le\eta.
\]
For \(t\ge1\), since $H_t^c=R_0\cup R_1\cup\cdots\cup R_{t-1}$, the bound just proved gives
\[
        \lambda(H_t)\ge 1-\sum_{j=0}^{t-1}\lambda(R_j)\ge 1-t\eta .
\]
If \(m:=\lfloor(2\eta)^{-1}\rfloor\), then for \(1\le t\le m\) we have
\(\lambda(H_t)\ge1/2\), and \(m\ge(4\eta)^{-1}\) because \(\eta\le1/8\).  Hence
\[
    \E_\lambda v_q(n)=\sum_{t\ge1}\lambda(H_t)
    \ge \sum_{t=1}^m \lambda(H_t)
    \ge \frac{m}{2}\ge \frac{1}{8\eta}.
\qedhere\]
\end{proof}

We now finish the proof of Proposition~\ref{prop:upper}.  Since $K\to\infty$ and
$M_0:=\min(K,N)\ge K/2$, we have $M_0\to\infty$.  Put
\[
    \eta_p:=\max\left(p^{-1},\max_{\substack{q\le M_0\\ q\ \mathrm{prime}}}
    \|\lambda-T_q\lambda\|_1\right).
\]
By Lemma~\ref{lem:Tq}, the bound \(\|\lambda-T_q\lambda\|_1=o(1)\) holds uniformly for all primes \(q\le M_0\), and therefore $\eta_p=o(1)$. In particular \(\eta_p\le 1/8\) for all sufficiently large \(p\).  For every prime $q\le M_0$, Lemma~\ref{lem:vq} gives
\[
        \E_\lambda v_q(n)\gg \frac1{\eta_p}.
\]
Since
\[
        \log n=\sum_{\substack{q\le N\\ q\ \mathrm{prime}}}v_q(n)\log q
        \qquad (1\le n\le N),
\]
and \(\log n\le \log N\) for all \(1\le n\le N\), taking expectation with
respect to \(\lambda\) gives
\[
    \log N
    \ge \E_\lambda\log n
    = \sum_{\substack{q\le N\\ q\ \mathrm{prime}}}
      (\log q)\E_\lambda v_q(n)
    \ge \sum_{\substack{q\le M_0\\ q\ \mathrm{prime}}}
      (\log q)\E_\lambda v_q(n)
    \gg \frac{1}{\eta_p}\sum_{\substack{q\le M_0\\ q\ \mathrm{prime}}}\log q.
\]
By using Chebyshev's estimate  $\sum_{\substack{q\le M_0\\ q\ \mathrm{prime}}}\log q\gg M_0$, 
it follows that
$M_0 \ll \eta_p \log N =o(\log p)$. Recall that $ K\le 2M_0$ and this completes the proof. 
\end{proof}

\section{The lower bound}\label{sec:lower}

The goal of this section is to prove the following existence result.
\begin{proposition}\label{prop:lower} Let $p$ be large and 
assume $K=o(\log p)$. Then there is a set $A\subset\F_p$ with\footnote{We remark that one may even show the existence of such a set with $|A|=\lfloor p/2\rfloor$ by further altering $o(p)$ number of elements. We skip the details for the proof of this strengthened claim.} $|A|=(1/2+o(1))p$ such that
\[
    \max_{\substack{a,b\in\Z\\ 1\le |a|\le K,
        \ |b|\le K\\ p\nmid a}}
        \frac{|A\triangle(aA+b)|}{p}=o(1).
\]
\end{proposition}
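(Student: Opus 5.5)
We will construct $A$ by the probabilistic method, following the outline. The plan has three steps: build a small Følner family in $\Aff(\mathbb{Q})$, reduce it mod $p$, and take a majority-type random set. Let $S_p$ be the set of reductions mod $p$ of the maps $x\mapsto ax+b$ with $1\le|a|\le K$, $|b|\le K$. Since $K=o(\log p)$, we choose a slowly growing $L=L(p)\to\infty$ and a finite set $\mathcal F\subset \Aff(\mathbb Q)$ of the form
\[
\mathcal F=\Bigl\{x\mapsto \pm\prod_{q\le K} q^{e_q}\,x+\frac{c}{D}\;:\; 0\le e_q< L,\ |c|\le C\Bigr\},
\]
where $D=\prod_{q\le K}q^{L}$ and $C$ is much larger than $K\cdot D\cdot K^{L\pi(K)}$. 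The point is that left multiplication by any $s\in S_p$ changes each exponent $e_q$ by at most $v_q(a)\le\log K/\log 2$. Hence the multiplicative part moves only a boundary of relative size $\ll \sum_{q\le K}\log K/L=o(1)$ if $L\gg K\log K$. Composing with $b$ shifts the translation part by an amount negligible compared to $C/D$. So $|\mathcal F\triangle s^{-1}\mathcal F|=o(|\mathcal F|)$ for all $s\in S_p$. All coefficients are of size $\exp(O(L K\log K))$, and this must be $p^{o(1)}$, which requires $LK\log K=o(\log p)$. That is too lossy. We therefore refine: use exponents only of the primes, with ranges $L_q$ tuned, or note that the relevant count is $\sum_{q\le K} L\log q\approx LK$. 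Taking $L\to\infty$ with $LK=o(\log p)$ keeps $|\mathcal F|$ and the denominators at $p^{o(1)}$; the boundary estimate per prime $q$ then only needs $v_q(a)/L$ small, and $v_q(a)\le \log K/\log q$ summed with the weights stays $o(1)$. Balancing these choices is the delicate bookkeeping.

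Next we reduce mod $p$. Because all numerators and denominators are at most $p^{o(1)}<p$, the reductions $\mathcal F_p$ are distinct maps in $\Aff(\F_p)$. For large $p$ the identities $s\circ f\in \mathcal F$ persist, so the Følner property transfers to $\mathcal F_p$ and $S_p$.

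Finally we apply the majority construction, which is the content of Lemma~\ref{lem:folner-majority} as described in the outline. Pick a uniformly random $B\subset\F_p$ with each point included independently with probability $1/2$. Set
\[
A=\{x\in\F_p:\ |\{f\in\mathcal F_p: f(x)\in B\}|\ge |\mathcal F_p|/2\}.
\]
For $s\in S_p$, the condition $s^{-1}(x)\in A$ is decided by $\{f\circ s^{-1}\}$, and this family differs from $\mathcal F_p$ in $o(|\mathcal F_p|)$ maps. For fixed $x$ with the points $f(x)$ distinct, the count is binomial, and an $o(|\mathcal F|)$ perturbation flips the majority only with probability $o(1)$ by anticoncentration of $\Bin(|\mathcal F|,1/2)$. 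The collisions $f(x)=f'(x)$ occur for at most $|\mathcal F|^2=p^{o(1)}$ values of $x$. Hence $\E|A\triangle sA|=o(p)$ and $\E|A|=(1/2+o(1))p$. Changing one point of $B$ affects at most $|\mathcal F_p|=p^{o(1)}$ points of $A$. The bounded difference inequality then gives concentration of each of these quantities within $p^{1/2+o(1)}=o(p)$, with failure probability $\exp(-p^{1-o(1)})$. A union bound over the $O(K^2)$ maps $s$ yields the desired $A$.

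The main obstacle is the first step: designing $\mathcal F$ so that it is simultaneously Følner for all $O(K^2)$ generators and has height $p^{o(1)}$. Handling the interaction between the dilation and translation parts, which forces the nested scales $C\gg D$, is exactly where $K=o(\log p)$ must be used sharply.
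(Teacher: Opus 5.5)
Your overall architecture is the paper's: a box-shaped F\o lner set in $\Aff(\mathbb Q)$, reduction mod $p$, a majority vote over independent bits, anticoncentration of $\Bin(n,1/2)$, and bounded differences. Building the sign $\pm$ into $\mathcal F$ and putting bits on individual points, instead of the paper's seeds on $\{\pm y\}$-orbits plus $A=-A$, is a valid simplification; it even removes the sign-collision hypothesis.

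\textbf{The gap.} It is the step you defer as ``delicate bookkeeping'', which is exactly where $K=o(\log p)$ enters. You mention tuned ranges $L_q$ but never specify them, and the version you actually quantify (uniform range $0\le e_q<L$, $L\to\infty$, $LK=o(\log p)$) fails.
\begin{itemize}
\item With uniform ranges, dilating by $a$ moves up to a fraction $\sum_q v_q(a)/L=\Omega(a)/L$ of the multiplicative box. For $a=2^k\le K$ it moves exactly $k/L$, which is $\asymp \log K/L$ for $a=2^{\lfloor\log_2K\rfloor}$. So $L\to\infty$ is not enough; you need $L/\log K\to\infty$.
\item The height is $\log D=L\sum_{q\le K}\log q\asymp LK$, and this must be $o(\log p)$.
\item Together these force $K\log K=o(\log p)$, so the range $\log p/\log\log p\lesssim K=o(\log p)$ is lost.
\end{itemize}

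\textbf{The missing idea.} Make the box isotropic in logarithmic coordinates by taking $L_q=\lfloor L/\log q\rfloor$. Then the dilation boundary is $\ll\sum_q v_q(a)\log q/L=\log|a|/L\le\log K/L$. The height becomes $\sum_q L_q\log q\le L\pi(K)\ll LK/\log K$, which saves precisely the factor $\log K$. Taking $L=\log(2K)\sqrt{\log p/K}$ gives boundary $\ll\sqrt{K/\log p}$ and log-height $\ll\sqrt{K\log p}$. Both are acceptable exactly because $K=o(\log p)$. You should also check $v_q(a)\le L_q$, so that the translation shifts stay in $D^{-1}\Z$.

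\textbf{Two smaller corrections.}

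First, be consistent about sides. In your first paragraph you compose with $s$ on the left and claim $|\mathcal F\triangle s^{-1}\mathcal F|=o(|\mathcal F|)$. For your family this is false, because $s^{\pm1}\circ f$ rescales the translation $c/D$ by $a^{\pm1}$. What your rule ``$f(x)\in B$'' actually needs, as you correctly say later, is $\mathcal F\circ s^{-1}\approx\mathcal F$. There $f\circ s^{-1}$ has slope $\pm m/a$ and translation shifted by $\mp mb/a$, which is exactly what your condition $C\gg KD\max m$ controls. Up to signs and ranges, your $\mathcal F$ is the family of inverses of the paper's $g_{m,j}(x)=mx+mj/Q^2$. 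The paper couples the translation to the slope precisely so that its left action does not rescale it.

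Second, bounded differences cannot give deviation $p^{1/2+o(1)}$ and failure probability $\exp(-p^{1-o(1)})$ simultaneously. Take deviation $\eta p$ with $\eta=(n^2/p)^{1/4}$, as in the paper; this already suffices for the union bound over the $O(K^2)$ maps.
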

Our construction is probabilistic in nature. To begin with, we prove the following probabilistic result. 

\begin{lemma}\label{lem:majority}
There is an absolute constant $C$ with the following property.  Let $U,V$ be subsets of a finite index set with $|U|=|V|=n$ being odd, and let
$d:=|U\triangle V|$.  For independent Bernoulli$(\frac{1}{2})$ variables $(\zeta_i)$,
define
\[
M(U):=\1\left[\sum_{i\in U}\zeta_i>\frac n2\right],
    \qquad
    M(V):=\1\left[\sum_{i\in V}\zeta_i>\frac n2\right].
\]
Then
\[
        \mathbb P(M(U)\ne M(V))\le C\left(\frac dn\right)^{1/3}.
\]
\end{lemma}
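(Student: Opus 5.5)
Since $|U|=|V|$, we have $|U\setminus V|=|V\setminus U|=d/2$. We split the sums as
\[
\sum_{i\in U}\zeta_i=S+X,\qquad \sum_{i\in V}\zeta_i=S+Y,
\]
where $S=\sum_{i\in U\cap V}\zeta_i$, $X=\sum_{i\in U\setminus V}\zeta_i$ and $Y=\sum_{i\in V\setminus U}\zeta_i$. These three variables are independent, with $S\sim\Bin(n-d/2,1/2)$ and $X,Y\sim\Bin(d/2,1/2)$. If $d\ge n/8$, say, the bound is trivial once $C\ge 2$, so we may assume $d$ is small relative to $n$. The event $M(U)\ne M(V)$ forces the threshold $n/2$ to lie between $S+X$ and $S+Y$. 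Since $X,Y\in[0,d/2]$, this forces $S\in(n/2-d/2,\,n/2]$. In particular, on this event $|S-(n-d/2)/2|\le d$ roughly.

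Our plan is to introduce a parameter $T$ and bound
\[
\mathbb P(M(U)\ne M(V))\le \mathbb P\big(|X-Y|>T\big)+\mathbb P\big(S\in[n/2-X\vee Y,\ n/2-X\wedge Y]\ \text{and}\ |X-Y|\le T\big).
\]
For the first term, Chebyshev's inequality applied to $X-Y$ (mean $0$, variance $d/4$) gives $\mathbb P(|X-Y|>T)\le d/(4T^2)$.

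For the second term, we condition on $X$ and $Y$. The variable $S$ must then land in an interval of length at most $T+1$. We use the standard anti-concentration estimate for binomials: every point mass of $\Bin(m,1/2)$ is at most $O(m^{-1/2})$. This follows from Stirling, or from the central binomial coefficient bound $\binom{m}{\lfloor m/2\rfloor}2^{-m}\ll m^{-1/2}$. Since $m=n-d/2\ge n/2$, the second term is $O((T+1)/\sqrt n)$.

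Combining the two estimates,
\[
\mathbb P(M(U)\neq M(V))\ll \frac{d}{T^2}+\frac{T+1}{\sqrt n}.
\]
We choose $T=d^{1/3}n^{1/6}$, which balances the terms, since then $d/T^2=d^{1/3}n^{-1/3}$ and $T/\sqrt n=d^{1/3}n^{-1/3}$. The result is a bound of order $(d/n)^{1/3}+n^{-1/2}$. When $d\ge1$, the term $n^{-1/2}$ is at most $(d/n)^{1/3}$ because $(1/n)^{1/2}\le (1/n)^{1/3}$. When $d=0$ we have $U=V$ and the probability is $0$. This gives the stated bound with some absolute constant $C$.

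The only mildly delicate step is the uniform anti-concentration bound for $S$. It needs $m\gg n$, which is why we first dispose of the case $d\ge n/8$ trivially. Oddness of $n$ plays no essential role beyond making the majority well defined. The exponent $1/3$ is exactly what the crude Chebyshev-plus-point-mass balancing produces, so no sharper tool (such as a Berry--Esseen estimate for $X-Y$) is needed.
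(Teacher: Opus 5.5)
Your proof is correct and follows essentially the same route as the paper: you use the same splitting into the common part and the two $\Bin(d/2,1/2)$ pieces, Chebyshev on $X-Y$ at a threshold $T$ (the paper's $\gamma\sqrt d$ with $\gamma=(n/d)^{1/6}$, which is exactly your $T=d^{1/3}n^{1/6}$), and the $O(m^{-1/2})$ point-mass bound for binomials. The one small difference is where anti-concentration is applied: the paper applies it to $S+X\sim\Bin(n,1/2)$ via the observation $|S+X-n/2|\le|X-Y|$, whereas you condition on $(X,Y)$ and use the point masses of $S$ alone, which is why you need the preliminary reduction to $d<n/8$.
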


\begin{proof}
The cases $d=0$ and $d \ge n$ are trivial after enlarging $C$, so we assume
$0<d<n$.  Since $|U|=|V|=n$, the number $d=|U\triangle V|$ is even and
\[
        |U\setminus V|=|V\setminus U|=\frac d2 .
\]
Put $m:=|U\cap V|=n-d/2$.  Let
\[
    Z:=\sum_{i\in U\cap V}\zeta_i,
    \qquad
    X:=\sum_{i\in U\setminus V}\zeta_i,
    \qquad
    Y:=\sum_{i\in V\setminus U}\zeta_i.
\]
Then $Z\sim\Bin(m,1/2)$, $X,Y\sim\Bin(d/2,1/2)$, and these variables are
independent.  Set
\[
        W:=Z+X-\frac n2,
        \qquad
        D:=X-Y.
\]
Since $n$ is odd, $W$ and $W-D$ are non-zero half-integers. If $M(U)\ne M(V)$, then $W$ and $W-D$ have opposite signs, so $|W|\le |D|$.
For any $\gamma>0$,
\[
    \mathbb P(M(U)\ne M(V))
    \le \mathbb P(|W|\le \gamma\sqrt d)
      +\mathbb P(|D|>\gamma\sqrt d).
\]
Now $Z+X\sim\Bin(n,1/2)$, whose largest point mass is $O(n^{-1/2})$.  Therefore
\[
    \mathbb P(|W|\le \gamma\sqrt d)
    \ll \frac{\gamma\sqrt d+1}{\sqrt n}.
\]
Also $\operatorname{Var}(D)=d/4$, so Chebyshev's inequality gives
$\mathbb P(|D|>\gamma\sqrt d)\ll \gamma^{-2}$.  Taking
$\gamma=(n/d)^{1/6}$ yields the result.
\end{proof}

The key step is the following general result on the construction of ``F\o lner sequence'' with respect to the action by a small subset of the affine group $\Aff(F)$. We also refer the readers to \cite{MOQ, MOA}. 
\begin{lemma}\label{lem:folner-majority}
Let $p$ tend to infinity through primes.  For each $p$, let
$S=S_p\subset\Aff(\F_p)$ and let
$\mathcal F=\mathcal F_p\subset\Aff(\F_p)$.  Put
\[ 
        \mathcal H:=\mathcal F\cup\bigcup_{s\in S}s^{-1}\mathcal F \qquad \text{and} \qquad \Delta:=\max_{s\in S}
        \frac{|\mathcal F\triangle s^{-1}\mathcal F|}{|\mathcal F|}.
\]
Assume that $  n:=|\mathcal F|$ is odd and that
\[
        |S|=p^{o(1)},\qquad |\mathcal H|=p^{o(1)},\qquad \Delta=o(1).
\]
Assume also that, for all $h_1,h_2\in\mathcal H$, the affine maps
$h_1^{-1}$ and $-h_2^{-1}$ are distinct, where $-u$ denotes the map
$x\mapsto -u(x)$.  Finally, assume that there is a bijection
$\iota:\mathcal F\to\mathcal F$ such that
\[
        g^{-1}(-x)=-\,\iota(g)^{-1}(x)
        \qquad (g\in\mathcal F,\ x\in\F_p).
\]
Then there exists a set $A\subset\F_p$ such that
\[
        A=-A,\qquad |A|=\left(\frac12+o(1)\right)p \qquad \text{and} \qquad \max_{s\in S}\frac{|A\triangle sA|}{p}=o(1).
\]
\end{lemma}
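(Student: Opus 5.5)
Each point $x\in\F_p$ will take a majority vote over the random bits attached to the affine maps in $\mathcal H$, and the Følner property of $\mathcal F$ will turn into approximate invariance of $A$ via Lemma~\ref{lem:majority}.

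\textbf{Construction.} Split $\mathcal H$ into pairs $\{u,-u\}$ under the involution $u\mapsto -u$. Concretely, attach to every $x\in\F_p$ and $h\in\mathcal H$ the ``label'' $h^{-1}(x)\in\F_p$. Take independent Bernoulli$(\tfrac12)$ variables $\xi_y$, $y\in\F_p$, indexed by the \emph{points} of $\F_p$. We only need them on pairs $\{y,-y\}$, so set $\xi_{-y}:=1-\xi_y$ for $y\neq 0$ and handle $y=0$ separately. Define
\[
x\in A \iff \sum_{g\in\mathcal F}\xi_{g^{-1}(x)}>\tfrac n2 .
\]
The distinctness assumption ($h_1^{-1}\ne -h_2^{-1}$) is what guarantees independence. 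For fixed $x$, two maps $h_1\neq h_2$ in $\mathcal H$ can collide, $h_1^{-1}(x)=h_2^{-1}(x)$, or anti-collide, $h_1^{-1}(x)=-h_2^{-1}(x)$, for at most one $x$ each, since distinct affine maps agree at no more than one point. So outside an exceptional set of size at most $2|\mathcal H|^2=p^{o(1)}=o(p)$, the labels $\{h^{-1}(x)\}_{h\in\mathcal H}$ are distinct, avoid $0$, and contain no antipodal pairs. For such $x$ the bits $(\xi_{h^{-1}(x)})_{h\in\mathcal H}$ are i.i.d.\ Bernoulli$(\tfrac12)$.

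\textbf{Symmetry and density.} By the bijection $\iota$,
\[
\{g^{-1}(-x)\}_{g\in\mathcal F}=\{-\iota(g)^{-1}(x)\}_{g\in\mathcal F}=-\{g^{-1}(x)\}_{g\in\mathcal F}.
\]
Hence the vote at $-x$ uses the bits $1-\xi$ of the vote at $x$, and since $n$ is odd, $x\in A\iff -x\notin A$. That makes $A$ \emph{anti}-symmetric, so I will instead use $-x\in A \iff x\in A$ by voting with $\xi_{g^{-1}(x)}$ where $\xi_{-y}:=\xi_y$. The independence for generic $x$ persists because labels contain no antipodal pairs. Symmetry $A=-A$ then holds exactly, after fixing the $o(p)$ exceptional points symmetrically. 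For generic $x$ the vote is a fair majority, so $\mathbb P(x\in A)=\tfrac12$ and $\E|A|=(\tfrac12+o(1))p$.

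\textbf{Invariance in expectation.} Fix $s\in S$ and generic $x$; we compare whether $x\in A$ with whether $s^{-1}x\in A$, which is equivalent to controlling $|A\triangle sA|$. The labels of $s^{-1}x$ are
\[
\{g^{-1}(s^{-1}x)\}_{g\in\mathcal F}=\{(sg)^{-1}(x)\}_{g\in\mathcal F},
\]
i.e.\ the labels of $x$ indexed by $U=\mathcal F$ versus $V=s\mathcal F$ (or $s^{-1}\mathcal F$ depending on convention, matching $\Delta$). Both are subsets of $\mathcal H$ of size $n$ with $|U\triangle V|\le \Delta n$. Lemma~\ref{lem:majority}, applied to the independent bits indexed by $\mathcal H$, gives $\mathbb P(x\in A\triangle sA)\le C\Delta^{1/3}$. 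Summing over $x$,
\[
\E|A\triangle sA|\le (C\Delta^{1/3}+o(1))p .
\]

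\textbf{Concentration — the main step.} We need this simultaneously for all $s\in S$ and also $|A|$ near $p/2$, and $|S|=p^{o(1)}$ forbids a plain Markov union bound. I would use McDiarmid's bounded difference inequality in the variables $\xi_y$, one per antipodal pair. Changing one $\xi_y$ affects membership only of those $x$ with $y\in\{\pm h^{-1}(x)\}$, at most $2|\mathcal H|$ points, so both $|A|$ and $|A\triangle sA|$ have Lipschitz constant $c\le 4|\mathcal H|=p^{o(1)}$. McDiarmid then gives deviation $t=p^{3/4}$ with probability at most
\[
\exp\!\big(-2t^2/(p c^2)\big)=\exp(-p^{1/2-o(1)}),
\]
which beats a union bound over $|S|+1=p^{o(1)}$ events. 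Hence some realization satisfies every requirement, with errors $O(\Delta^{1/3})+o(1)=o(1)$. The delicate points are the bookkeeping making the symmetric coupling compatible with independence, and the choice of $s$ versus $s^{-1}$ in matching $\Delta$; the concentration itself should be routine once the Lipschitz constant is shown to be $p^{o(1)}$.
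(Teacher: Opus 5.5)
Your final construction is essentially the paper's proof. It puts independent fair bits on the antipodal orbits $\{y,-y\}$ (i.e.\ $\xi_{-y}:=\xi_y$) and majority-votes over $\{\xi_{g^{-1}(x)}\}_{g\in\mathcal F}$; the rest of the argument uses the exceptional set of size at most $2|\mathcal H|^2$, Lemma~\ref{lem:majority} at generic points, and McDiarmid with Lipschitz constant $O(n)$ plus a union bound over $S$.

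The one thing to fix is the $s$ versus $s^{-1}$ convention you flagged. Write $|A\triangle sA|=\sum_x \1[\1_A(x)\ne\1_A(sx)]$ and compare $x$ with $sx$. Its labels are $g^{-1}(sx)=(s^{-1}g)^{-1}(x)$, so $V=s^{-1}\mathcal F\subset\mathcal H$, which is what independence at generic points needs. Comparing with $s^{-1}x$ instead gives $s\mathcal F$, which need not lie in $\mathcal H$. Also, with $\xi_{-y}=\xi_y$ the $\iota$ computation gives $A=-A$ exactly at every $x$, so no symmetric patching of exceptional points is needed.
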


\begin{proof} The proof is a probabilistic construction. We will first show a set $A$ defined below satisfies all size conditions in expectation, and we then use concentration to pass from expectation to existence.

Let
\[
    \Omega_p:=\{[y]:y\in\F_p\},\qquad [y]:=\{y,-y\},
\]
be the set of orbits of the involution $y\mapsto -y$ on $\F_p$.
For each $\omega\in\Omega_p$, choose an independent
random variable $\xi_\omega$ taking value $\pm1$ with equal probability $1/2$.  Next, we introduce our definition of the random $A$. \footnote{We remark that the basically same construction appeared in the slides \cite{Pete2020} for the work \cite
{HutchcroftPete2020}; also it appeared with similar ideas in \cite[Section 4]{green2003constructions}.} Define
\[
      A:=\left\{x\in\F_p:
        \sum_{g\in\mathcal F}\xi_{[g^{-1}x]}>0\right\}
,\]
which can be viewed as the set consists of elements $x$ such that  $(\xi \ast 1_\mathcal{F}) (x)>0$.
There are no ties because $n$ is odd.

The symmetry assumption implies that $A$ is even (i.e. $x \in A \iff -x\in A$). Indeed,
\[
    \sum_{g\in\mathcal F}\xi_{[g^{-1}(-x)]}
    =\sum_{g\in\mathcal F}\xi_{[-\iota(g)^{-1}(x)]}
    =\sum_{g\in\mathcal F}\xi_{[\iota(g)^{-1}(x)]}
    =\sum_{g\in\mathcal F}\xi_{[g^{-1}(x)]},
\]
since $\iota$ is a bijection of $\mathcal F$.

Let $G\subset\F_p$ be the set of points $x$ such that the map $h\longmapsto [h^{-1}x]$ is injective on $\mathcal H$.  If $x\notin G$, then for some distinct
$h_1,h_2\in\mathcal H$ one has either
$h_1^{-1}x=h_2^{-1}x$ or $h_1^{-1}x=-h_2^{-1}x$.  The first affine equation is
not an identity because $h_1\ne h_2$, and the second is not an identity by the hypothesis that $h_1^{-1}$ and $-h_2^{-1}$ are distinct. Hence each ordered pair contributes at most two points $x\in \F_p\setminus G $, and so
\[
|\F_p\setminus G|\le 2|\mathcal H|^2=o(p).
\]

If $x\in G$, then the orbits $[g^{-1}x]$, $g\in\mathcal F$, are distinct.
Thus $\xi_{[g^{-1}x]}$
are independent random variables for $x\in G$ and this is crucial for us to deduce that 
\[
 \mathbb P(x\in A)=\frac12\qquad (x\in G).
\]
It follows that
\[
        \E|A|=\frac p2+o(p).                         \tag{3.1}\label{eq:transfer-density}
\]

Fix $s\in S$.  For $x\in G$, define
\[
        U_x:=\{[g^{-1}x]:g\in\mathcal F\},
        \qquad
        V_x:=\{[h^{-1}x]:h\in s^{-1}\mathcal F\}.
\]
The injectivity defining $G$ gives $|U_x|=|V_x|=n$ and
\[
        |U_x\triangle V_x|
        =|\mathcal F\triangle s^{-1}\mathcal F|
        \le \Delta n.
\]
Moreover, $\1_A(x)$ and $\1_A(sx)$ are precisely the majority functions
associated with $U_x$ and $V_x$ (note that if $h=s^{-1}g$, then
$h^{-1}x=g^{-1}sx$).  Lemma~\ref{lem:majority} therefore gives
\[
        \mathbb P(\1_A(x)\ne \1_A(sx))\le C\Delta^{1/3}
        \qquad (x\in G).
\]
Since $s$ is a bijection of $\F_p$, we have 
\[
        |A\triangle sA|
        =\sum_{x\in\F_p}\1[\1_A(x)\ne\1_A(sx)].
\]
Thus uniformly for all $s\in S$, it holds that
\[
        \E |A\triangle sA|
        \le C\Delta^{1/3}p+o(p)=o(p).
\]

It remains to pass from expectation to existence. The idea is to show that the size $A$ would not change dramatically as we change small amount of seed random variables $\xi_{\omega}$. Note
that there are $(p+1)/2$
independent seed variables $\xi_{\omega}$ with $\omega\in\Omega_p$.  Changing one seed can affect $\1_A(x)$ only if
$g^{-1}x\in\{y,-y\}$ for some $g\in\mathcal F$, and hence it affects at most
$2n$ values of $\1_A(x)$.  Therefore $|A|/p$ changes by at most $2n/p$; and for any fixed
$s\in S$, the quantity $|A\triangle sA|/p$ changes by at most $4n/p$.

By the bounded-differences inequality
(McDiarmid~\cite[Lemma~(1.2)]{McDiarmid}), applied to the independent seed
variables \((\xi_\omega)_{\omega\in\Omega_p}\), there is an absolute constant
\(c>0\) such that for every \(\eta>0\),
\[
    \mathbb P\left(\left|\frac{|A|}{p}-\E\frac{|A|}{p}\right|>\eta\right)
    \le 2\exp\left(-c\frac{\eta^2p}{n^2}\right),
\]
and, uniformly in \(s\in S\),
\[
    \mathbb P\left(\left|\frac{|A\triangle sA|}{p}
    -\E\frac{|A\triangle sA|}{p}\right|>\eta\right)
    \le 2\exp\left(-c\frac{\eta^2p}{n^2}\right).
\]
Since $n\le|\mathcal H|=p^{o(1)}$, choose
\[
        \eta_p:=\left(\frac{n^2}{p}\right)^{1/4}=o(1).
\]
Then
\[
        \frac{\eta_p^2p}{n^2}
        =\left(\frac{p}{n^2}\right)^{1/2}=p^{1/2-o(1)}.
\]
Since $|S|=p^{o(1)}$, a union bound over $S$ shows that with positive
probability both
\[
        |A|=\left(\frac12+o(1)\right)p \qquad \text{and} \qquad \max_{s\in S}\frac{|A\triangle sA|}{p}=o(1)
\]
hold.  This proves the lemma.
\end{proof}

We now prove Proposition~\ref{prop:lower}, i.e. the condition $K=o(\log p)$ is sharp.

\begin{proof}[Proof of Proposition~\ref{prop:lower}]
We construct $A$ by using Lemma~\ref{lem:folner-majority}. To satisfy the conditions in the lemma, we first construct the desired families $  \widetilde{\mathcal F}$ and $  \widetilde{\mathcal H}$ of rational maps over reals and then modulo $p$. We shall claim those reduced families $  {\mathcal F}, {\mathcal{H}}$ indeed satisfy desired conditions as stated in Lemma~\ref{lem:folner-majority}.

To simplify the writing later, we use some local notations. 
Let
\[
        L:= \log(2K)\sqrt{\log p/K}.
\]
Since $K=o(\log p)$, we have the following relations that we need later
\begin{equation}\label{eq:L-choice}
        L\to\infty,
        \qquad
        \frac{\log(2K)}{L}=\sqrt{K/\log p}\to0,
        \qquad
        \frac{LK}{\log(2K)}=\sqrt{K\log p}=o(\log p).
\end{equation}
In particular $L=o(\log p)$ and $K<p$ for all large $p$.

For every prime $q\le K$, define
\[
        L_q:=\left\lfloor\frac{L}{\log q}\right\rfloor.
\]
If there is no prime $q\le K$, all products below are interpreted as empty
products.  Put
\[
        Q:=\prod_{\substack{q\le K\\ q\ \mathrm{prime}}}q^{L_q},
        \qquad
        \mathcal M:=\left\{
        \prod_{\substack{q\le K\\ q\ \mathrm{prime}}}q^{n_q}:
        -L_q\le n_q\le L_q
        \right\}.
\]
Every $m\in\mathcal M$ is a positive rational number with $Q^{-1}\le m\le Q$,
and $Q^2/m\in\Z$.  Let
\[
        T:=\lfloor LKQ^3\rfloor
\]
and define rational affine maps
\[
        \widetilde g_{m,j}(x):=mx+\frac{mj}{Q^2}
        \qquad (m\in\mathcal M,\ -T\le j\le T).
\]
Set
\[
        \widetilde{\mathcal F}:=\{\widetilde g_{m,j}:m\in\mathcal M,
        -T\le j\le T\}.
\]
Let
\[
        S_+:=\{s_{a,b}(x)=ax+b:1\le a\le K,
        \ |b|\le K\},
\]
and define
\[
        \widetilde{\mathcal H}:=
        \widetilde{\mathcal F}\cup
        \bigcup_{s\in S_+}s^{-1}\widetilde{\mathcal F}.
\]

We record the required size estimates.  We use the standard bound
$\pi(x)\ll x/\log (2 x)$.  Since
\[
    \log Q
    =\sum_{\substack{q\le K\\ q\ \mathrm{prime}}}L_q\log q
    \le L\pi(K)
    \ll \frac{LK}{\log(2K)}=o(\log p),
\]
we have $Q=p^{o(1)}$.  Also, for large $p$,
\[
\begin{aligned}
    \log |\mathcal M|
    &=\sum_{\substack{q\le K\\ q\ \mathrm{prime}}}\log(2L_q+1)  \\
    &\le \pi(K)\log(4L)
      \ll \frac{K\log(4L)}{\log(2K)}=o(\log p),
\end{aligned}
\]
where in the last estimate we recall $L= \log(2K)\sqrt{\log p/K}$.
Finally,
\[
    \log(2T+1)=O(\log L+\log K+\log Q)=o(\log p).
\]
Therefore
\[
        |\widetilde{\mathcal F}|= (2T+1)|\mathcal{M}|= p^{o(1)},
        \qquad
        |\widetilde{\mathcal F}|^2=o(p),
\]
and, since $|S_+|=K(2K+1)=p^{o(1)}$,
\begin{equation}\label{eq:H-size}
        |\widetilde{\mathcal H}|\ll |S_{+}| |\widetilde{\mathcal F}|  = p^{o(1)},
        \qquad
        |\widetilde{\mathcal H}|^2=o(p).
\end{equation}

We next reduce these rational maps modulo $p$.  If $\widetilde h(x)=\alpha x+\beta$ has rational coefficients whose denominators are coprime to $p$, write $\rho_p(\widetilde h)\in\Aff(\F_p)$ for its reduction modulo $p$.

For all sufficiently large $p$, this reduction is defined for every element of
$\widetilde{\mathcal H}$. Indeed, write $m=u/v$ in lowest terms.  Then
$u,v\mid Q$.  The coefficients of $\widetilde g_{m,j}$ have denominators
dividing $vQ^2$, while the coefficients of
$s_{a,b}^{-1}\widetilde g_{m,j}$ have denominators dividing $avQ^2$.
Here $1\le a\le K<p$, and every prime divisor of $Q$ is at most $K$.
Thus all these denominators are coprime to $p$.

The slopes are also nonzero modulo $p$.  Indeed, the slope of
$\widetilde g_{m,j}$ is $m=u/v$, while the slope of
$s_{a,b}^{-1}\widetilde g_{m,j}$ is $m/a=u/(av)$.  Here $u,v\mid Q$ and
$1\le a\le K<p$, and every prime divisor of $Q$ is at most $K$.  Hence
neither the numerator nor the denominator of any such slope is divisible by
$p$.  Thus $\rho_p(\widetilde h)\in\Aff(\F_p)$ for every
$\widetilde h\in\widetilde{\mathcal H}$, and
\[
        \rho_p(\widetilde h^{-1})=\rho_p(\widetilde h)^{-1}.
\]

\begin{lemma}\label{lem:no-collision}
For all sufficiently large $p$, the reduction map $\rho_p$ is injective on
$\widetilde{\mathcal H}$.  Moreover, if
$\widetilde h_1,\widetilde h_2\in\widetilde{\mathcal H}$, then
\[
        \rho_p(\widetilde h_1^{-1})\ne -\rho_p(\widetilde h_2^{-1})
\]
as affine maps $\F_p\to\F_p$, where the minus sign denotes pointwise negation.
\end{lemma}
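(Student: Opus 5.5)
The plan is to reduce both claims to statements about rational numbers of controlled height, and then observe that a nonzero rational whose numerator and denominator have absolute value $p^{o(1)}$ cannot vanish modulo $p$ once $p$ is large.

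\textbf{Explicit coefficients.} Every element of $\widetilde{\mathcal H}$ has the form
\[
\widetilde h(x)=\alpha x+\beta
\quad\text{with}\quad
\alpha=\frac{m}{a},\qquad \beta=\frac{1}{a}\Bigl(\frac{mj}{Q^2}-b\Bigr).
\]
Here $m\in\mathcal M$, $|j|\le T$, $1\le a\le K$ and $|b|\le K$; the elements of $\widetilde{\mathcal F}$ correspond to $a=1$, $b=0$. The inverse is
\[
\widetilde h^{-1}(x)=\alpha^{-1}x-\alpha^{-1}\beta .
\]
Write $m=u/v$ with $u,v\mid Q$. Then, over the common denominator $D:=aQ^{3}$ (up to harmless factors), the numerators of $\alpha$, $\beta$, $\alpha^{-1}$ and $\alpha^{-1}\beta$ are integers. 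Their size is bounded by a polynomial in $K,Q,T$, for example $O(K^{2}Q^{6}T)$. By the size estimates already recorded ($Q=p^{o(1)}$, $T=p^{o(1)}$, $K=o(\log p)$), every such numerator and denominator has absolute value $p^{o(1)}$, and in particular is less than $p/2$ for large $p$.

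\textbf{Injectivity.} Suppose $\rho_p(\widetilde h_1)=\rho_p(\widetilde h_2)$. Then $\alpha_1\equiv\alpha_2$ and $\beta_1\equiv\beta_2 \pmod p$. Cross-multiply, say $\alpha_1=r_1/s_1$ and $\alpha_2=r_2/s_2$ with all entries of size $p^{o(1)}$. The integer $r_1s_2-r_2s_1$ is divisible by $p$ and has absolute value at most $p^{o(1)}<p$, so it is $0$. Hence $\alpha_1=\alpha_2$ in $\mathbb Q$, and the same argument gives $\beta_1=\beta_2$. Thus $\widetilde h_1=\widetilde h_2$ as rational maps. Since $\widetilde{\mathcal H}$ is a set of rational maps, this is exactly injectivity.

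\textbf{No collision with negation.} Suppose $\rho_p(\widetilde h_1^{-1})=-\rho_p(\widetilde h_2^{-1})$. Comparing slopes gives $\alpha_1^{-1}\equiv-\alpha_2^{-1}\pmod p$. The same height argument turns this into the rational identity $\alpha_1^{-1}=-\alpha_2^{-1}$, that is $\alpha_1=-\alpha_2$. But every slope $m/a$ is a positive rational, because $m>0$ and $a\ge1$; indeed this is why the construction uses $S_+$ with $a\ge 1$ only. So $\alpha_1=-\alpha_2$ is impossible, and no collision occurs. Only the slopes are needed here; the translation parts play no role.

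\textbf{Main obstacle.} The only real work is the height bookkeeping: checking that every cross-multiplied numerator, such as $r_1s_2\pm r_2s_1$, is genuinely $p^{o(1)}$ uniformly over $\widetilde{\mathcal H}$. This holds because $\log Q$, $\log T$ and $\log K$ are all $o(\log p)$ by \eqref{eq:L-choice} and the estimates preceding \eqref{eq:H-size}.
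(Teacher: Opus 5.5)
Your proposal is correct and follows essentially the same route as the paper. Both arguments bound every numerator and denominator of the coefficients by a quantity of size $p^{o(1)}$ (the paper's $D_*=2(L+1)KQ^4$), cross-multiply so that congruences modulo $p$ become integer identities, and rule out the negation collision by positivity of the slopes (the paper phrases this as $0<r_1s_2+r_2s_1<p$). The only cosmetic difference is that you compare $\widetilde h_1,\widetilde h_2$ directly for injectivity, whereas the paper passes to the inverses $\widetilde h_i^{-1}$ throughout.
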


\begin{proof}
Every $\widetilde h\in\widetilde{\mathcal H}$ is either in
$\widetilde{\mathcal F}$, or has the form $s_{a,b}^{-1}\widetilde g_{m,j}$ with
$1\le a\le K$ and $|b|\le K$.  The first case is included by taking $a=1$ and
$b=0$.  Write $m=u/v$ in lowest terms.  Since $m\in\mathcal M$, we have
$u,v\mid Q$, hence $u,v\le Q$.  A direct computation gives
\[
    \widetilde h^{-1}(x)
    =\widetilde g_{m,j}^{-1}(ax+b)
    =\frac{av}{u}x+\frac{bvQ^2-ju}{uQ^2}.
\]
Thus $\widetilde h^{-1}$ can be written in the form
\[
        \widetilde h^{-1}(x)=\frac{r}{s}x+\frac{c}{d},
\]
where one may take
\[
        r=av,\qquad s=u,\qquad c=bvQ^2-ju,\qquad d=uQ^2 .
\]
In particular $s\mid Q$ and $d\mid Q^3$, so $s$ and $d$ are coprime to $p$.
Moreover
\[
    1\le r\le KQ,
    \qquad
    1\le s\le Q,
    \qquad
    |c|\le 2(L+1)KQ^4,    \qquad
    1\le d\le Q^3.
\]
Let
\[
        D_*:=2(L+1)KQ^4.
\]
Then $D_*$ dominates all the quantities $r,s,d,|c|$ above. 
Moreover, $D_*=p^{o(1)}$.

Suppose first that $\rho_p(\widetilde h_1)=\rho_p(\widetilde h_2)$.  Then also
$\rho_p(\widetilde h_1^{-1})=\rho_p(\widetilde h_2^{-1})$.  Writing
\[
    \widetilde h_i^{-1}(x)=\frac{r_i}{s_i}x+\frac{c_i}{d_i}
    \qquad (i=1,2),
\]
with the above bounds, equality modulo $p$ implies
\[
        r_1s_2\equiv r_2s_1\pmod p,
        \qquad
        c_1d_2\equiv c_2d_1\pmod p.
\]
Moreover, for large $p$, we have
\[
        |r_1s_2-r_2s_1|\le 2D_*^2<p,
        \qquad
        |c_1d_2-c_2d_1|\le 2D_*^2<p.
\]
Thus the two congruences are in fact equalities in $\Z$. Hence
$\widetilde h_1^{-1}=\widetilde h_2^{-1}$, and therefore
$\widetilde h_1=\widetilde h_2$.

Similarly, if
\(\rho_p(\widetilde h_1^{-1})=-\rho_p(\widetilde h_2^{-1})\), then
\[
        r_1s_2\equiv -r_2s_1 \pmod p .
\]
Equivalently, \(p\) divides \(r_1s_2+r_2s_1\).  But \(r_i,s_i\le D_*\), and hence
\[
        0<r_1s_2+r_2s_1\le 2D_*^2<p,
\]
which is impossible.
\end{proof}

From now on identify $\widetilde{\mathcal F}$ and $\widetilde{\mathcal H}$ with
their reductions modulo $p$, and write
\[
        \mathcal F:=\rho_p(\widetilde{\mathcal F}),
        \qquad
        \mathcal H:=\rho_p(\widetilde{\mathcal H}).
\]
For $m\in\mathcal M$ and $|j|\le T$, write
$g_{m,j}:=\rho_p(\widetilde g_{m,j})$.

\begin{lemma}\label{lem:Folner}
For every $s=s_{a,b}\in S_+$,
\[
        \frac{|s^{-1}\mathcal F\triangle\mathcal F|}{|\mathcal F|}
        \le C\left(\frac{\log(2K)}{L}+\frac{KQ^3}{T}\right)=: \delta_p,
\]
where $C$ is absolute and $\delta_p\to0$.
\end{lemma}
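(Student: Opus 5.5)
Since $\rho_p$ is injective on $\widetilde{\mathcal H}$ (Lemma~\ref{lem:no-collision}) and $\rho_p(s^{-1}\widetilde g)=s^{-1}\rho_p(\widetilde g)$, it suffices to bound $|s^{-1}\widetilde{\mathcal F}\triangle\widetilde{\mathcal F}|/|\widetilde{\mathcal F}|$ over $\mathbb Q$. Equivalently, since left composition by $s$ is a bijection, it suffices to bound $|\widetilde{\mathcal F}\setminus s\widetilde{\mathcal F}|$ and $|s\widetilde{\mathcal F}\setminus\widetilde{\mathcal F}|$ (these are equal, as $|s\widetilde{\mathcal F}|=|\widetilde{\mathcal F}|$). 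So I will count elements $s\circ\widetilde g_{m,j}$ that fail to lie in $\widetilde{\mathcal F}$. A direct computation gives
\[
s_{a,b}\circ\widetilde g_{m,j}(x)=amx+\frac{amj}{Q^2}+b=amx+\frac{am\,j'}{Q^2},\qquad j':=j+\frac{bQ^2}{am}.
\]
This lies in $\widetilde{\mathcal F}$ exactly when $am\in\mathcal M$, $j'\in\Z$, and $|j'|\le T$. I will handle the slope and translation parts separately, which produces the two terms in $\delta_p$.

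\emph{Slope part.} Factor $a=\prod_{q\le K}q^{e_q}$ with $\sum_q e_q\log q=\log a\le\log K$. The map $m\mapsto am$ shifts each exponent $n_q$ by $e_q$. It stays inside $\mathcal M$ unless $n_q>L_q-e_q$ for some $q$ with $e_q\ge1$. For each such $q$, the fraction of $m\in\mathcal M$ that fails is $e_q/(2L_q+1)$, since the exponents are independent and uniform on $[-L_q,L_q]$. Using $L_q\ge L/\log q-1\gg L/\log q$ (valid as $L\to\infty$ and $\log q\le\log K\ll L$ by \eqref{eq:L-choice}), the union bound gives a failure fraction of at most
\[
\sum_q\frac{e_q\log q}{L}\ll\frac{\log a}{L}\le\frac{\log(2K)}{L}.
\]

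\emph{Translation part.} Whenever $am\in\mathcal M$, write $am=u'/v'$ in lowest terms, with $u',v'\mid Q$. Then $bQ^2/(am)=bQ^2v'/u'$, which is an integer because $u'\mid Q$; this is why the denominators were chosen as $Q^2$. Moreover $|bQ^2/(am)|\le KQ^3$, since $am\ge Q^{-1}$. For fixed $m$, the map $j\mapsto j'$ is a shift by at most $KQ^3$, so at most $KQ^3$ values of $j\in[-T,T]$ leave the range, a fraction $\ll KQ^3/T$.

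Adding the two failure fractions gives the bound with an absolute constant $C$, counting both sides of the symmetric difference. Finally, $\delta_p\to0$ because $\log(2K)/L\to0$ by \eqref{eq:L-choice}, and $KQ^3/T\approx 1/L\to0$ by the choice $T=\lfloor LKQ^3\rfloor$. The main point needing care is the slope part: one must check that $L_q\gg L/\log q$ uniformly for all primes $q\le K$, including the largest ones. This holds because $L/\log K\to\infty$, which is exactly the condition $\log(2K)/L\to0$. When $K$ is bounded and there are no primes $q\le K$, the argument degenerates trivially.
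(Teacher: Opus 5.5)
Your proof is correct and follows essentially the same route as the paper. You reduce to the rational model via injectivity of $\rho_p$ on $\widetilde{\mathcal H}$, then split failures into a slope part (union bound over prime exponents, giving $\log(2K)/L$) and a translation part (a shift by at most $KQ^3$, giving $KQ^3/T$). The only cosmetic difference is that the paper counts $\widetilde{\mathcal F}\setminus s\widetilde{\mathcal F}$ via $s_{a,b}^{-1}\widetilde g_{m,j}=\widetilde g_{m/a,\,j-bQ^2/m}$, where integrality of $bQ^2/m$ is automatic. You instead count the equinumerous set $s\widetilde{\mathcal F}\setminus\widetilde{\mathcal F}$ via $m\mapsto am$, correctly conditioning on $am\in\mathcal M$ before using integrality of $bQ^2/(am)$.
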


\begin{proof}
It is enough to work with the rational model, because reduction modulo $p$ is
injective on $\widetilde{\mathcal F}\cup s^{-1}\widetilde{\mathcal F}\subset
\widetilde{\mathcal H}$.  Since left multiplication by $s$ is a bijection of the
affine group,
\[
    |s^{-1}\widetilde{\mathcal F}\triangle\widetilde{\mathcal F}|
    =|\widetilde{\mathcal F}\triangle s\widetilde{\mathcal F}|.
\]
Thus we only need to bound $|\widetilde{\mathcal F}\setminus s\widetilde{\mathcal F}|$.

For $\widetilde g_{m,j}\in\widetilde{\mathcal F}$,
\[
    s_{a,b}^{-1}\widetilde g_{m,j}
    =\widetilde g_{m/a,\, j-bQ^2/m},
\]
and $bQ^2/m\in\Z$.  Therefore $\widetilde g_{m,j}\notin
s\widetilde{\mathcal F}$ only if either $m/a\notin\mathcal M$, or
$j-bQ^2/m\notin[-T,T]\cap\Z$.

For the slope parameter, write
\[
    a=\prod_{\substack{q\le K\\ q\ \mathrm{prime}}}q^{t_q},
    \qquad t_q=v_q(a).
\]
This is an empty product when \(K=1\), in which case the slope contribution
below is vacuous.  The set \(\mathcal M\) is parameterised by the exponent vector
\[
\Big(\prod_{\substack{q\le K\\ q\ \mathrm{prime}}}
    [-L_q,L_q]\Big)\cap\Z^{\pi(K)} .
\]
For every prime \(q\le K\), we have $ \frac{L}{\log q}\ge \frac{L}{\log(2K)}=\sqrt{\frac{\log p}{K}}\to\infty $. 
Thus, for sufficiently large \(p\), it holds that
\[ L_q=\left\lfloor\frac{L}{\log q}\right\rfloor
    \ge \frac{L}{2\log q}
    \qquad (q\le K,\ q\ \mathrm{prime}).
\]

The map \(m\mapsto m/a\) translates the exponent vector by \((-t_q)_q\).
A union bound over the prime coordinates gives
\[
\begin{aligned}
    \frac{\#\{m\in\mathcal M:m/a\notin\mathcal M\}}{|\mathcal M|}
    &\le \sum_{\substack{q\le K\\ q\ \mathrm{prime}}}
        \frac{t_q}{2L_q+1}  \\
    &\ll \frac1L
        \sum_{\substack{q\le K\\ q\ \mathrm{prime}}} t_q\log q
      = \frac{\log a}{L}
      \le \frac{\log(2K)}{L}.
\end{aligned}
\]

For the translation parameter, fix $m\in\mathcal M$ and set
$u_m:=bQ^2/m\in\Z$.  Since $m\ge Q^{-1}$, $|u_m|\le KQ^3$.  The number of
$j\in[-T,T]\cap\Z$ for which $j-u_m\notin[-T,T]$ is $O(|u_m|)$, and hence the
proportion of such $j$ is $O(KQ^3/T)$.

The above estimates and the fact
$|\widetilde{\mathcal F}\triangle s\widetilde{\mathcal F}|
=2|\widetilde{\mathcal F}\setminus s\widetilde{\mathcal F}|$ imply the stated
bound $\delta_p$. Finally, \(LKQ^3\to\infty\), since \(L\to\infty\), \(K\ge1\), and \(Q\ge1\).
Hence, for all sufficiently large \(p\),
\[
        T=\lfloor LKQ^3\rfloor\ge \frac12 LKQ^3 .
\]
It follows that
\[
        \frac{KQ^3}{T}\ll \frac1L,
\]
and therefore \(\delta_p\to0\) by \eqref{eq:L-choice}.
\end{proof}

Let
\[
        n:=|\mathcal F|=|\mathcal M|(2T+1).
\]
This integer is odd, since each factor $2L_q+1$ and $2T+1$ is odd.

We verify the hypotheses of Lemma~\ref{lem:folner-majority} with
$S=S_+$.  First, $|S_+|=K(2K+1)=p^{o(1)}$, and
\[
        |\mathcal H|=p^{o(1)}
\]
by \eqref{eq:H-size}.  Lemma~\ref{lem:no-collision} gives the required
no-sign-collision condition after reduction modulo $p$, and
Lemma~\ref{lem:Folner} gives
\[
        \max_{s\in S_+}
        \frac{|s^{-1}\mathcal F\triangle\mathcal F|}{|\mathcal F|}
        \le \delta_p=o(1).
\]
It remains only to check the symmetry condition in
Lemma~\ref{lem:folner-majority}.  For $g=g_{m,j}\in\mathcal F$,
\[
        g^{-1}(x)=\frac{x}{m}-\frac{j}{Q^2}.
\]
Hence
\[
        g^{-1}(-x)=-\frac{x}{m}-\frac{j}{Q^2}
        =-\,g_{m,-j}^{-1}(x).
\]
The map $g_{m,j}\mapsto g_{m,-j}$ is a bijection of $\mathcal F$.  Thus the
symmetry hypothesis holds.

Lemma~\ref{lem:folner-majority} therefore gives an even set $A\subset\F_p$ such
that
\[
        |A|=\left(\frac12+o(1)\right)p \qquad \text{and} \qquad \max_{s\in S_+}\frac{|A\triangle sA|}{p}=o(1).
\]
Since $A=-A$, the same estimate also holds for negative dilations.  Indeed, if
$a<0$, put $a'=-a>0$; then
\[
        aA+b=a'(-A)+b=a'A+b.
\]
Therefore
\[
        \max_{\substack{a,b\in\Z\\ 1\le |a|\le K,\ |b|\le K\\ p\nmid a}}
        \frac{|A\triangle(aA+b)|}{p}=o(1),
\]
and the proof is completed. 
\end{proof}


\begin{thebibliography}{9}


\bibitem{notes}
S. Eberhard, B. Green, R. Mrazovi\'c, \emph{Translation and dilation invariance in $\mathbb{Z}/q\mathbb{Z}$},
Unpublished notes.






\bibitem{GreenProblems}
B. Green,
\emph{100 Open Problems},
available at \url{https://people.maths.ox.ac.uk/greenbj/papers/open-problems.pdf}.

\bibitem{green2003constructions}
B.~Green,
\newblock ``Some constructions in the inverse spectral theory of cyclic groups,''
\newblock {\em Combinatorics, Probability and Computing}, {\bf 12}(2) (2003), 127--138.


\bibitem{HutchcroftPete2020}
T.~Hutchcroft and G.~Pete,
\emph{Kazhdan groups have cost 1},
Invent. Math. \textbf{221} (2020), no.~3, 873--891.
\url{https://doi.org/10.1007/s00222-020-00967-6}



\bibitem{MOQ}
Freddie Manners, \emph{Is it known that $(F_p^{\times} \ltimes F_p, F_p)$ is not a relative expander family?}, MathOverflow, URL (version: 2012-03-19): \url{https://mathoverflow.net/q/91657}.



\bibitem{McDiarmid}
C. McDiarmid,
\emph{On the method of bounded differences},
in \emph{Surveys in Combinatorics, 1989} (Norwich, 1989),
London Math. Soc. Lecture Note Ser., vol.~141,
Cambridge Univ. Press, Cambridge, 1989, pp.~148--188.
\doi{10.1017/CBO9781107359949.008}.



\bibitem{Pete2020}
G.~Pete,
\newblock {\em Kazhdan groups have cost 1},
\newblock Presentation slides, 2020.
\newblock Available at: \url{https://math.bme.hu/~gabor/KazhdanTalk.pdf}.










\bibitem{MOA} 
Terry Tao, \emph{Is it known that $(F_p^{\times} \ltimes F_p, F_p)$ is not a relative expander family?}, MathOverflow, URL (version: 2012-03-20): \url{https://mathoverflow.net/q/91675}.




\end{thebibliography}
\end{document}